 \def\mf{\operatorname{mf}}
\def\simm{\operatorname{simm}}
\def\vol{\operatorname{vol}}
\def\p{\operatorname{p}}
\def\P{\operatorname{P}}
\def\Ker{\operatorname{Ker}}
\renewcommand{\Im}{\mathop{\mathrm{Im}}\nolimits}
\newtheorem{utv}{Proposition}
\newtheorem{lem}{Lemma}
\title {Probabilistic properties of topologies of finite metric spaces' minimal fillings.}
\author {Vsevolod Salnikov}
\date{}
\begin{document}

\maketitle

\begin{abstract}
In this work we provide a way to introduce a probability measure on the space of minimal fillings of finite additive metric spaces as well as an algorithm for its computation. The values of probability, got from the analytical solution, coincide with the computer simulation for the computed cases. Also the built technique makes possible to find the asymptotic of the ratio for families of graph structures.

\end{abstract}

\section {Introduction}

In the middle of the 17th century Pierre de Fermat stated a question, how to find a point in a plane, such that a sum of the distances to three fixed points has a minimal value. This point is now called Fermat point of the triangle. Later on the problem has been generalized and now there are several different formulations, united under the common name of a Steiner problem.

One of the most common versions of the Steiner problem is the following. Let $X$ be a finite set of points in a plane. We define a {\em connecting tree} $T(X)$ as a finite set of straight segments, whose set of end points contains $X$, and for any pair of points from $X$ there exists one and only one polygonal line connecting them and built from the chosen segments. For each connecting tree we can calculate the sum $l(T(X))$ of the Euclidean lengths of all included line segments. If there exists such a connecting tree $T^*(X)$, that for any connecting tree $T(X)$ the inequality $l(T^*(X)) \le l(T(X)))$ holds, then this tree is called a {\em (Euclidean) Steiner minimal tree}. Unfortunately the Steiner problem is NP-hard. Moreover, this problem has been in one of the first lists of NP-hard problems published by Richard M. Karp in 1972  \cite{NP}. Garey, Graham and Johnson \cite{DESMT} have switched from the continuous space to the discrete one and proved that it is also NP-hard.

In one of his works Gromov \cite{Gromov} has defined a concept of a minimal filling. Let $M$ be a smooth closed manifold with a distance function $\rho$. Let's consider all films $W$, spanning $M$, i.e. compact manifolds with boundary equal to $M$. Let's define a distance function $d$ on $W$, which doesn't reduce the distance between points from $M$, i.e. $\rho(P,Q)\le d(P,Q)$ for all $P$ and $Q$ from $M$. Such a metric space $W = (W, d)$ is called a {\em filling} of the metric space $M = (M, \rho)$. Gromov problem consists of the description of the infimum of the fillings volumes and such spaces $W$, called {\em minimal fillings}, on which the infimum is achieved at. This problem is another example of an optimization problem in geometry.

A.Ivanov and A.Tuzhilin \cite{Sledstv} have suggested to consider a minimal filling problem for the finite metric spaces as a generalization of both Steiner problem and Gromov's minimal filling problem.

For a fixed weighted graph $G$ with a weight function $w$, let's define a distance function $d_w(p,q)$ between it's vertices $p$ and $q$ to be equal to the minimum of the weights of the paths, connecting these vertices in $G$, and equal to infinity if there are no such paths.

For a finite metric space $(M,\rho)$, graph $G=(V,E)$ with non-negative weights of edges we call {\em connecting} $M$, if $M \subset V$ and for each pair of vertices from $M$, there exists a path between them in the graph. Let's define a 
{\em filling} of a finite metric space $(M,\rho)$ as a weighted graph  ${\EuScript G} = (G, w)$ connecting $M$ and such that for each $p, q \in M$ we have $\rho(p, q) \le d_w(p, q)$. Then graph $G$ is called the {\em type} of the filling or it's  {\it topology}. A value $\mf((M,\rho)) = \inf w({\EuScript G})$ among all fillings ${\EuScript G}$ of the space $(M,\rho)$, we call a {\em weight of minimal filling}, and every filling  ${\EuScript G}$, such that $w({\EuScript G}) = \mf((M,\rho))$ пїЅ {\em minimal filling} of the space $(M,\rho)$. Subset of the vertices $\partial G \subset V$, corresponding to $M$ is called {\em boundary}, and it's elements ---  {\em boundary vertices}. In \cite{Sledstv} it is shown, that without loss of generality it can be assumed, that $G$ is a tree and it's boundary contains only vertices with degree 1. 
It is very important to keep in mind, that we look not only for a graph, but also for a {\em boundary map} --- a bijection from the boundary vertices set $\partial  G$ to $M$. 

For specific cases the minimal filling problem has been solved. For example, for three points $p_1, p_2, p_3$  with distances $\rho_{12}, \rho_{23}, \rho_{13}$ between them   it is easy to see, that a minimal filling contains one extra vertex $x$, connected to each point where weight of the edge $xp_i$ is the following 
$$w(xp_i) = \frac {\rho_{ij} + \rho_{ik}  - \rho_{jk}}{2}$$
Later on, the general formula for the minimal filling weight has been found \cite{MFform}, but it doesn't simplify the search of the minimal filling, because it contains exponential-size check of all {\it a priori} possible types of the filling with binary tree structure.

If the type is fixed, then the minimization problem among the fillings with this type (called  {\em minimal parametric filling}), can be easily reduced to a linear programming problem, which can be efficiently solved. Unfortunately the set of possible types grows exponentially with the number of points in the metric space, that is why it is not possible to handle each of them. This makes interesting to look for an approximate solution.

In this work a new method to find an approximate solution is suggested. More precisely, we will study, which topologies are more probable and after that the approximate solution will be found only among the minimal parametric fillings with one of the most probable topologies. Currently we restrict ourselves to additive metric spaces, because there is more information about minimal fillings structure for these metric spaces, but the approach apparently can be generalized for arbitrary metric spaces as well as for other optimization problems.

\section {Additive space case}

Let $(M,\rho)$ be a finite metric space. It is called {\em additive}, if there exists a  {\em generating tree}, i.e. weighted tree with a set of boundary vertices equal to $M$, and  $\rho$-distance between any two points $p,q \in M$ equals to $d_w(p,q)$ .

Currently we will take into account only non-negative weights, while similar questions can be stated for generalized additive spaces  \cite{Zahar}, where negative weights can occur.

It is known, that for an arbitrary additive space the generating tree without degenerate edges is unique \cite{Unique}. And this generating tree as well as all other generating trees, is a minimal filling for this metric space \cite{Sledstv}.

In accordance with \cite{Sledstv} we call {\em mustache} a subgraph, which, for some $k$, consists of $k$ adjacent edges, each of them is incident to a boundary vertex, and degree of the common vertex equals to $k + 1$.

Let's define and calculate the value of a probability that the graph of a minimal filling of the additive space has a topology $G$. This value will be denoted by $\P( G)$. Let's identify points in $M$ with positive integers $\{1,\ldots,n\}$. With a fixed topology we have a map $T_ G \colon w \mapsto \rho$, from the weight distributions to $n \times n$ distance-matrices $\rho$, where $\rho_{p,q} = d_w(p, q)$. From the following lemma we will see that there exists an inverse map, which gives a weight distribution from the additive space distance-matrix and the generating tree topology.

\begin{lem}
Let's fix a tree $G$ without non-boundary vertices with degree 2 (the limitation doesn't change the generality, because such vertex with both incident edges can be always replaced by one edge), and suppose, that there exists a non-negative weight distribution $w$ making $G$ a generating tree for the space $M$, then the weight distribution $w$ can be uniquely found from the distance-matrix.
\end{lem}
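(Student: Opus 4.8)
The plan is to argue by induction on the number $n$ of boundary vertices of $G$, using as the basic computational engine the three-point formula already recalled in the introduction. First I would record the elementary identity underlying everything: if $v$ is a vertex of $G$ and $p,q,r$ are three boundary vertices whose pairwise connecting paths all pass through $v$, then writing each $\rho$-distance as the sum of edge weights along the corresponding path and cancelling the shared portions gives
\[
d_w(p,v)=\frac{\rho_{pq}+\rho_{pr}-\rho_{qr}}{2}.
\]
In particular, if $p$ is a leaf adjacent to $v$, the left-hand side is exactly the weight $w(pv)$ of the pendant edge, which is thereby expressed purely through the known matrix $\rho$. The whole proof amounts to peeling off such pendant edges one mustache at a time.

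For the inductive step I would exploit the hypothesis that every internal (non-boundary) vertex has degree at least $3$: degree $2$ is excluded by assumption, while degree $1$ would make the vertex a leaf. The internal vertices span a connected subtree $G_{\mathrm{int}}$ of $G$ (any vertex strictly inside a path between two internal vertices has degree at least $2$, hence is itself internal). Choosing a leaf $v$ of $G_{\mathrm{int}}$ yields an internal vertex with exactly one internal neighbour $u$, and therefore, since its degree is at least $3$, with at least two pendant leaves $p_1,\dots,p_k$, $k\ge 2$ --- that is, a mustache. For each such leaf I recover $w(p_i v)$ from the displayed formula, taking $q$ to be another pendant leaf of $v$ and $r$ to be any boundary vertex lying beyond $u$; such an $r$ exists precisely because $v$ is not the only internal vertex in this step. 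The base cases, a single edge ($n=2$) and a star with one internal vertex, are handled directly by the same formula.

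Having found all pendant weights at $v$, I would delete $p_1,\dots,p_k$ and declare $v$ a new boundary vertex, producing a tree $G'$ with strictly fewer leaves. The distance matrix of the reduced space is known from the data already at hand: $d_w(v,s)=\rho_{p_1 s}-w(p_1 v)$ for every remaining leaf $s$, and all other entries are inherited from $\rho$. Only the degree of $v$ changes under this contraction (from $k+1$ to $1$), so $G'$ again contains no internal vertex of degree $2$ and still admits the restricted weight distribution as a generating tree. The induction hypothesis then determines all remaining edge weights of $G'$ uniquely, and together with the pendant weights $w(p_i v)$ already computed this fixes $w$ on every edge of $G$. Since each weight was obtained as a prescribed value of entries of $\rho$, the recovery is both explicit and unique.

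The essential point, and the place where the hypotheses must be used with care, is guaranteeing that the auxiliary boundary vertices $q$ and $r$ demanded by the three-point formula genuinely exist and sit on the correct sides of the branching vertex. This is exactly what the exclusion of degree-$2$ vertices secures: it forces the chosen $v$ to carry at least two pendant edges and to have further tree structure beyond $u$, so the needed configuration is always available. The complementary verification --- that the contraction preserves both the structural hypothesis and the additivity of the reduced space --- is routine bookkeeping once the mustache has been located, and I would keep it brief.
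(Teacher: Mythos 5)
Your induction is a genuinely different reduction from the paper's (the paper peels off one leaf at a time and promotes its neighbour to a boundary vertex; you peel off a whole mustache at once and promote its branching vertex to a boundary leaf), and within its scope the argument is sound. But it proves a more restricted statement than the one asserted: you silently assume that the boundary vertices of $G$ are exactly its leaves. The lemma does not say this --- it forbids degree-$2$ vertices only among \emph{non-boundary} vertices, and that qualifier is there precisely because a generating tree of an additive space may have boundary vertices of degree $2$ or more (three ``collinear'' points with $\rho_{13}=\rho_{12}+\rho_{23}$ force the middle point to be a non-leaf vertex of any non-degenerate generating tree). In that generality two of your structural claims fail. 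First, the non-boundary vertices need not span a connected subtree: in the tree with edges $1x$, $2x$, $x3$, $3y$, $y4$, $y5$, where $M=\{1,2,3,4,5\}$ and only $x,y$ are non-boundary, the path from $x$ to $y$ passes through the degree-$2$ boundary vertex $3$, so $G_{\mathrm{int}}=\{x,y\}$ is disconnected; your inference ``degree at least $2$, hence itself internal'' conflates non-leaf with non-boundary. Second, your mustache structure breaks: in the same example the vertex $x$ has neighbours $1,2$ (pendant leaves) and $3$ (boundary of degree $2$), so it has no internal neighbour $u$, and the tree is covered by none of your cases (single edge, star, at least two internal vertices), even though it satisfies the lemma's hypotheses.

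This is not a pedantic corner case, as one can see from the paper's own proof: its reduction (delete a leaf $p$, declare its neighbour $q$ a boundary vertex) manufactures exactly such degree-$2$ boundary vertices in the intermediate trees, so the induction there could not even close if the statement were read your way. Your reduction elegantly avoids ever creating non-leaf boundary vertices, so your argument is complete and self-contained under the extra hypothesis ``boundary $=$ leaves'' --- which is in fact all the paper later uses, since Lemma 2 applies the result only to binary trees. To prove the lemma as written you need one additional step, for instance: if some boundary vertex $b$ has degree $d\ge 2$, split $G$ at $b$ into $d$ subtrees, each of which contains $b$ as a leaf, inherits its distance sub-matrix from $\rho$ (paths between boundary vertices of one subtree stay inside it), and still satisfies the hypotheses; then run your mustache induction on each piece separately.
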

\begin{proof}
Let's prove by induction by the number of edges in the tree $G$. If there is only one edge, then it is necessary, that it's weight is equal to the distance between vertices it is connecting. Suppose that the statement is proved for all trees $G$ with not more than $k$ edges, let's consider a tree with $k+1$ edge. We can find a vertex $p$ of degree 1. Let $q$ be a unique vertex, connected with $p$ by an edge. If degree of $q$ is less than 3, then $q$ is boundary vertex, and it is necessary that the weight of the edge connecting $p$ and $q$ is equal to the distance between the vertices. If the degree is more than 2, there exists at least 2 boundary vertices $q_1$ and $q_2$, different from $p$, such that a path, connecting $q_1$ and $q_2$, passes through $q$. In this case the weight of the edge  connecting $p$ and $q$ is necessary equal to $(\rho_{p,q_1} + \rho_{p,q_2} - \rho_{q_1,q_2}) / 2$. All other weights can be found if we will work with the initial tree without vertex $p$, it's incident edge and taken $q$ as a boundary vertex (if it isn't the case, then we put the distance from $q$ to any boundary vertex $q_3$ equal to the difference between $\rho_{p,q_3}$ and the found weight of the edge).
\end{proof}

Let's fix a norm on the distance-matrices space. For further calculations it is convenient to take the  $\ell_1$ norm of elements above the diagonal (on the diagonal there are zeros, the matrix is symmetric, thus this norm is a half of the standard $\ell_1$-norm of the matrix). 

We will define the probability $P(G)$ as a ratio of the measure of all additive spaces with topology $G$ to the measure of all additive spaces, having the same number of vertices. It is possible to show, that for each additive space we can find a generating tree, which is a binary tree (the degree of a vertex can be 1 or 3). That's why among the possible types $G$ we take into account only binary trees. In this case some weights can have zero values and there might exist different topologies of a generating tree, but we define the measure in a way, that the measure of additive spaces having zero-weights in the generating tree is zero.

Let us now describe the measure. We need to describe an additive space of type $G$ with a fixed boundary map (correspondence between vertices with degree 1 of the tree $G$ and the set  $M$). Let's enumerate all edges of the chosen topology and denote their weights by $x_1, x_2, ..., x_m$ (the quantity $m$ of edges in a binary tree with $n$ boundary vertices is equal to $2n -3$). Having the set $\{x_i\}$ we can build a distance-matrix $\rho$ between points from $M$. As all $x_i \ge 0$, all elements in the matrix $\rho$ are non-negative and hence the $\ell_1$ norm of the matrix is equal to the sum of it's elements, $\|\rho\|_1 =  \sum q_i x_i$, where $q_i$ is the number of occurrences of the edge $i$ in the paths from vertex $j$ to vertex $k$, for all $j,k$ : $j < k$. Let's note that each $q_i$ is equal to the product of the number of boundary vertices from one side of the edge and the number of boundary vertices from another side. 

\begin{utv}
The map $T_ G$ constructed above  is linear.
\end {utv}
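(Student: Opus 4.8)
The plan is to exhibit $T_G$ explicitly as a fixed $\{0,1\}$-matrix acting on the weight vector, after which linearity is immediate. First I would use the hypothesis that $G$ is a tree: between any two boundary vertices $p$ and $q$ there is exactly one path in $G$, so the defining minimum $d_w(p,q) = \min$ over connecting paths collapses to the weight of that single path. Thus $\rho_{p,q}$ is simply the sum of the weights of the edges lying on the unique path from $p$ to $q$, with no minimization operation involved.

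Next, for each pair $p < q$ and each edge index $i$ I would set $a^{(p,q)}_i = 1$ if edge $i$ lies on the unique path joining $p$ and $q$, and $a^{(p,q)}_i = 0$ otherwise, so that $\rho_{p,q} = \sum_{i=1}^{m} a^{(p,q)}_i x_i$. The essential observation is that each coefficient $a^{(p,q)}_i$ depends only on the topology $G$ together with the boundary map, and not on the weights $w$: in a tree the path joining two vertices is determined combinatorially, independently of the edge weights assigned to it. Collecting the rows indexed by the pairs $p<q$ into a single matrix $A = (a^{(p,q)}_i)$ then gives $T_G(w) = A\,w$, after the natural identification of the symmetric matrix $\rho$ with its entries above the diagonal already used for the norm. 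Since $A$ has entries independent of $w$, additivity $T_G(w + w') = T_G(w) + T_G(w')$ and homogeneity $T_G(\lambda w) = \lambda\,T_G(w)$ follow at once.

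The only point that genuinely requires the tree hypothesis --- and hence the main, if modest, obstacle --- is the weight-independence of the coefficients $a^{(p,q)}_i$. For a general connecting graph the distance $d_w(p,q)$ is a minimum over several competing paths, which is a concave, merely piecewise-linear function of $w$, and the active path can switch as the weights vary, so linearity would fail. Here the uniqueness of paths in a tree removes the minimization entirely, and no such switching can occur. I would therefore take care to invoke explicitly that $G$ is a tree precisely at the step where I assert that the path, and thus each entry of $A$, is fixed.
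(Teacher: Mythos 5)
Your proposal is correct and follows essentially the same route as the paper: both use the uniqueness of the path between two boundary vertices in a tree to write each entry $\rho_{p,q}$ as a weight-independent sum of edge weights, from which linearity is immediate. Your packaging of this as a fixed $\{0,1\}$-matrix $A$ with $T_G(w)=Aw$ (and your explicit remark that the path, hence each coefficient, cannot switch as $w$ varies) is just a slightly more explicit rendering of the computation the paper performs entrywise.
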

\begin {proof}
For each pair of vertices  $p,q$ there exists a unique path in the tree $G$, that connects them. Then the corresponding distance matrix component is equal to the sum of weights of edges $\{r_{i_k}\}_{k=1}^t$, making this path. Let's denote by $\rho_{w} $ the distance-matrix, corresponding to the weight distribution $w$, where $w_{i_k}$ is the weight of the edge $r_{i_k}$.  Let $w = w^1 + w^2$, then
$$\rho_{w} (p,q) = \sum_{k=1}^{t} w_{i_k} = \sum_{k=1}^t (w^1_{i_k} + w^2_{i_k}) = \sum_{k=1}^{t} w^1_{i_k} + \sum_{k=1}^{t} w^2_{i_k} = \rho_{w^1} (p,q)+\rho_{w^2} (p,q)$$
For the case $w=\lambda w^1$ the proof is similar.
\end{proof}

\begin{lem}
Let $G$ be a binary tree, then the rank of $T_ G$ is maximal.
\end {lem}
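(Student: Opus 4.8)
The plan is to reduce the statement to injectivity of the linear map and then to reuse the inversion idea of the previous lemma. First I would record the dimensions involved: the domain of $T_G$ is the space of weight distributions $\mathbb{R}^m$ with $m = 2n-3$, and the codomain is the space of distance matrices, which via the entries above the diagonal we identify with $\mathbb{R}^{N}$ for $N = \binom{n}{2}$. A direct check shows $m \le N$ for all $n \ge 3$, since $2n-3 \le \tfrac{n(n-1)}{2}$ is equivalent to $(n-2)(n-3)\ge 0$. Hence the rank of the linear map $T_G$ is automatically at most $m$, and ``maximal rank'' means $\operatorname{rank} T_G = m$, i.e.\ that $T_G$ is injective, equivalently $\ker T_G = \{0\}$. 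This is the statement I would actually prove.

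To establish injectivity I would observe that the constructive recovery in the previous lemma is in fact a collection of \emph{linear} identities in the matrix entries, so it furnishes a left inverse of $T_G$ on $\Im T_G$; concretely I would show directly that $T_G w = 0$ forces $w = 0$. So suppose $T_G w = 0$, meaning every path-sum $\rho_{p,q}$ vanishes. Pick a pendant edge $pq$, with $p$ a leaf and $q$ its neighbour; since $G$ is binary, $q$ is an internal vertex of degree $3$. Choosing two leaves $q_1, q_2$ lying in the two components of $G - q$ that do not contain $p$, the unique tree-paths give the identity $\rho_{p,q_1}+\rho_{p,q_2}-\rho_{q_1,q_2} = 2\,w(pq)$, because every edge outside $pq$ is traversed an even number of times and cancels. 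Since the left-hand side is $0$, we conclude $w(pq)=0$.

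I would then finish by induction on the number of edges, exactly as in the previous lemma: delete the leaf $p$ together with the now-weightless edge $pq$, regard $q$ as a new boundary vertex, and suppress the degree-$2$ vertex that may thereby appear (replacing its two incident edges by a single edge carrying the sum of their weights). The restricted map is then the $T$ of a smaller binary tree whose distance data still vanishes, so by the induction hypothesis all remaining weights are zero; the base case $m=1$ is immediate. Hence $\ker T_G = \{0\}$ and the rank is maximal.

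The hard part will be bookkeeping rather than anything conceptual. I must carry out the reduction so that the smaller graph is genuinely binary again, and I must keep the recovery strictly as a linear identity valid for \emph{arbitrary} (in particular signed) distance data, so that it proves injectivity of the linear map and not merely uniqueness among non-negative fillings. One also has to guarantee that the two leaves $q_1,q_2$ in distinct branches at $q$ exist; this is precisely where binarity is used, since $\deg q = 3$ leaves at least two branches at $q$ other than the one containing $p$.
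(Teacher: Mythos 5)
Your reduction of ``maximal rank'' to $\ker T_G=\{0\}$ is correct (the dimension count $2n-3\le \binom{n}{2}$ is right), and the pendant-edge identity $\rho_{p,q_1}+\rho_{p,q_2}-\rho_{q_1,q_2}=2\,w(pq)$ is a valid linear identity for arbitrary signed weights; this part is fine. The genuine gap is in the induction step. After deleting $p$ and the edge $pq$, the vertex $q$ has degree $2$ (this always happens for $n\ge 3$, not ``may''), and you restore binarity by merging its two incident edges, of weights $a$ and $b$, into a single edge of weight $a+b$. The induction hypothesis applied to the smaller binary tree only says that all of \emph{its} weights vanish; for the merged edge this gives $a+b=0$, not $a=0$ and $b=0$. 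So two of the original weights are left uncontrolled, and the conclusion ``all remaining weights are zero'' does not follow. The alternative reading of your reduction --- keep $q$ as a new boundary vertex of degree $2$ and do not merge --- fails for a different reason: the smaller tree is then not binary, so an induction hypothesis stated for binary trees does not apply to it.

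The gap is repairable in several ways. (i) After the merge, use the data involving the deleted leaf once more: for a leaf $x$ separated from $q$ by the edge of weight $a$, one has $0=\rho_{p,x}=w(pq)+a+(\text{weights already shown to vanish})=a$, hence $a=0$ and then $b=0$. (ii) Skip induction entirely: for an internal edge $uv$ choose leaves $p_1,p_2$ in the two branches at $u$ away from $v$ and $p_3,p_4$ in the two branches at $v$ away from $u$; then $2\,w(uv)=\rho_{p_1,p_3}+\rho_{p_2,p_4}-\rho_{p_1,p_2}-\rho_{p_3,p_4}$, which together with your pendant-edge formula exhibits every weight as a linear function of the distances. (iii) Broaden the induction class to that of Lemma 1 (trees with no non-boundary degree-$2$ vertices), which is closed under your reduction without any merging. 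Note finally that the paper's own proof takes a shorter, different route: it uses Lemma 1 as a black box --- if $w^0\neq 0$ lay in the kernel, then translating an arbitrary $w$ and $w+w^0$ by a sufficiently large constant would produce two distinct positive weight distributions with the same image, contradicting the uniqueness of Lemma 1. That translation trick is precisely what converts a uniqueness statement about non-negative weights into injectivity of the linear map, which is the content your argument tries to re-derive by hand.
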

\begin {proof}
Our tree $G$ satisfies conditions of the Lemma 1 and thus the weight distribution is unique. Suppose that the rank is not maximal, then  $\Ker T_G$ is non-trivial. Let's take an arbitrary weight distribution $\{w_i\}$ and a non-zero element $\{w^0_i\}$ from the kernel. Let $\mu = 2 \min (\min_i (w_i), \min_i(w_i+w^0_i))$. We can build two positive weight distributions: $\{w^1_i = w_i + |\mu|\}, \{w^2_i = w_i + w^0_i + |\mu|\}$, which differ by the element from the kernel. According to the linearity, they have the same image, but it contradicts to the uniqueness of the weight distribution.

\end{proof}

Now it is clear that the image of the positive orthant after a map $T_G$ is a convex $m$-dimensional cone, where $m$ is a number of edges in the selected topology.

\begin{utv}
The intersection of the $T_G$-image of the positive orthant with a plane $\|\rho\|_1 = 1$ is a $T_G$-image of an $m$-dimensional simplex.
\end {utv}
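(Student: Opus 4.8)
The plan is to exhibit an explicit simplex $\Delta$ in the weight space $\mathbb{R}^m$ whose $T_G$-image is exactly the claimed cross-section, and then to invoke that an injective linear map carries a simplex to a simplex. Write $\Pi = \mathbb{R}^m_{\ge 0}$ for the positive orthant, so that $T_G(\Pi)$ is the convex $m$-dimensional cone described above. The key observation is that the norm functional, though not linear on the whole distance-matrix space, restricts to an honest linear functional on $T_G(\Pi)$: for $x \in \Pi$ every entry of $\rho = T_G(x)$ is non-negative, so by the computation preceding Proposition 1 one has $\|\rho\|_1 = \sum_i q_i x_i$, where each coefficient $q_i$ is a product of two boundary-vertex counts and hence satisfies $q_i \ge 1 > 0$.

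Guided by this, I would set $\Delta = \{x \in \Pi : \sum_i q_i x_i = 1\}$. First I would verify that $\Delta$ is genuinely a simplex: it is the convex hull of the $m$ points $v_i = e_i/q_i$, where $e_i$ are the standard basis vectors, and since these lie one on each coordinate axis they are affinely independent, so $\Delta$ is a bona fide simplex. Here the strict positivity $q_i > 0$ is exactly what ensures that $\Delta$ is bounded rather than an unbounded region.

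Next I would establish the set equality $T_G(\Pi) \cap \{\rho : \|\rho\|_1 = 1\} = T_G(\Delta)$ by double inclusion. In one direction, every $x \in \Delta$ lies in $\Pi$ and satisfies $\|T_G(x)\|_1 = \sum_i q_i x_i = 1$, so $T_G(\Delta)$ is contained in the cross-section. For the converse, take $\rho$ in the cross-section; then $\rho = T_G(x)$ for some $x \in \Pi$, and by Lemma 2 (maximality of the rank, hence injectivity of $T_G$) this $x$ is the unique preimage of $\rho$. The norm identity then forces $\sum_i q_i x_i = \|\rho\|_1 = 1$, so $x \in \Delta$ and $\rho \in T_G(\Delta)$. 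Finally, because $T_G$ is linear and injective it preserves affine combinations and affine independence, hence maps the simplex $\Delta$ with vertices $v_i$ onto the simplex $T_G(\Delta)$ with vertices $T_G(v_i)$, which is the desired conclusion.

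I expect the only delicate point to be the interplay between the non-linear norm and the linear map: the argument succeeds precisely because $\|\cdot\|_1$ coincides with the linear functional $x \mapsto \sum_i q_i x_i$ on the positive orthant (where all matrix entries keep a fixed sign), and because the injectivity supplied by Lemma 2 lets me pull the single condition $\|\rho\|_1 = 1$ back to one affine equation in the weights. Everything else --- the boundedness of $\Delta$ and the fact that a linear isomorphism onto its image sends simplices to simplices --- is routine once the positivity of all the $q_i$ has been recorded.
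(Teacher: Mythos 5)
Your proposal is correct and follows essentially the same route as the paper: you construct the identical simplex with vertices $e_i/q_i$ (the paper's $w^i$ with $w^i_i = 1/q_i$), use the same key identity $\|T_G(x)\|_1 = \sum_i q_i x_i$ on the positive orthant, and prove the same double inclusion. Your appeal to Lemma 2 for uniqueness of the preimage is actually superfluous for the set equality (any preimage in the orthant suffices, and one exists by definition of the cross-section), but it does no harm and mirrors what is needed to see that the image is itself a simplex.
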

\begin {proof}
This simplex can be easily constructed. It's vertices are weight distributions $w^i$ with only one non-zero component $w^i_i = 1 / q_i$. This simplex consists of the points of the form $w = \sum w^i t_i$, where $t_i \in [0,1]$ and $\sum \limits_i t_i = 1$. According to the linearity and the selected norm, we get $\|\rho_w\|_1 = \sum t_i \|\rho_{w^i} \|_1= \sum t_i = 1$ (sums are among all edges). 

Vice versa, for each point $\rho$  from the image of the orthant, hence $T_G(w)$ belongs to the plane, such that $\|\rho\|_1 = 1$, consider its pre-image $\bar w$. The weight distribution $\bar w$ can be represented as $\bar w = \sum k_i w^i$, because $w^i$ form a basis, and according to the choice of $w^i$ and the fact, that  components of the $\bar w$ are non-negative, $k_i$ are also non-negative and $1=\|\rho\|_1 = \sum k_i$.
\end {proof}

Let's denote by $\p_{ G}$ the probability that a minimal filling has topology $ G$ with enumerated boundary vertices, i.e. the boundary mapping is fixed. 
We assume, that $\p_{ G}$ is proportional to the volume of the intersection of the positive orthant image and a ball $\|\rho\|_1 \le 1$. This choice is, of course, not unique, but very natural, as it is a standard measure in the vector space. 

Taking different boundary maps,  {\it a priori} we can get different images after the map $T_G$. But in the meanwhile, it is clear, that if chosen boundary maps differ by the symmetry of the graph, then we will get the same image (weight of the edges will switch together with the edges). Thus automorphisms of the graph generate symmetries of the image after the map $T_G$. Later on we will say that topologies are different if the images of corresponding maps are different. The number of elements in the group of automorphism of the tree $G$, containing such elements, that generate the isometry of the corresponding additive space, we will denote by $\simm ( G)$. After the defined notations, it is clear that the probability $\P(G)$, that a minimal filling will have topology $G$ (with some boundary map) is defined by the following formula
$$\P( G) = c \cdot \p_{G} \cdot n! / \simm ( G)= C \cdot \vol (\Im T_{ G} \cap B_1(0)) \cdot n! / \simm ( G),$$
where $B_1(0)$ --- a unit ball in the selected norm and constants $c$ and $C$ don't depend on $ G$.

Thus $$\P( G_1) / \P( G_2) = \frac {\p_{G_1} }{\p_{G_2} } \frac {\simm ( G_2)}{\simm ( G_1)}$$

\begin{lem}
If for two different topologies and some weight distributions we get the same additive space, then it is necessary, that there are zeros in weight distributions, i.e. if $T_{G_1}(w_1)=T_{G_2}(w_2)$, then at least one from the weight distributions $w_1, w_2$ contains zero components.
\end{lem}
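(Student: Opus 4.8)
The plan is to prove the contrapositive and reduce everything to the uniqueness of the non-degenerate generating tree of an additive space, already recalled above from \cite{Unique}. So I would assume that neither $w_1$ nor $w_2$ has a zero component, i.e. both weight distributions are strictly positive, and derive that $G_1$ and $G_2$ cannot be different topologies, contradicting the hypothesis of the lemma.

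First I would check that a binary tree equipped with strictly positive weights is an honest generating tree without degenerate edges. Indeed, its boundary vertices have degree $1$ and every internal vertex has degree $3$, so there is no internal vertex of degree $2$ to suppress; and since all weights are positive there is no zero-weight (degenerate) edge to contract. Writing $\rho := T_{G_1}(w_1) = T_{G_2}(w_2)$, this means that both $(G_1, w_1)$ and $(G_2, w_2)$ realize one and the same additive space $(M,\rho)$ as a generating tree without degenerate edges.

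Next I would invoke the uniqueness statement: for an additive space the generating tree without degenerate edges is unique, together with the identification of its degree-$1$ vertices with the points of $M$. Hence $(G_1,w_1)$ and $(G_2,w_2)$ must coincide as weighted trees with boundary maps; in particular the underlying topologies and their boundary maps agree, so $T_{G_1}=T_{G_2}$ and therefore $\Im T_{G_1}=\Im T_{G_2}$. By the convention adopted above that two topologies are called different precisely when the images of their maps differ, $G_1$ and $G_2$ are not different --- the required contradiction. Thus at least one of $w_1,w_2$ must contain a zero component.

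The only delicate point, and the step I would treat most carefully, is the matching of definitions: that strict positivity of the weights really does rule out every kind of degeneracy covered by \cite{Unique}, and that the notion of ``different topology'' used here (distinct images $\Im T_G$) is genuinely incompatible with the two reduced generating trees coinciding. If one prefers an argument independent of \cite{Unique}, the same conclusion follows from the four-point condition: for any four boundary vertices the split realized by a tree is the pairing with the smallest distance-sum, and a positive internal path makes this minimum strict, so positivity of $w_1$ and $w_2$ forces all quartets of $G_1$ and of $G_2$ to be resolved by the common matrix $\rho$ in the same way, which pins down a single tree topology and again gives $G_1=G_2$.
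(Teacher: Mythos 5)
Your proof is correct and takes essentially the same route as the paper: both arguments reduce the lemma to the uniqueness of the generating tree without degenerate edges cited from \cite{Unique}, observing that strictly positive weights mean no degenerate edges, so two distinct topologies realizing the same space would contradict that uniqueness. The paper compresses this into a single sentence, whereas you spell out the contrapositive, the degree conditions, and an optional four-point-condition variant, but the key idea is identical.
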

\begin{proof}
A generating tree without degenerate edges is unique, thus if there are two different topologies, then there are degenerate edges.
\end{proof}

Hereby we have, that the volume of the intersection of the images for two different topologies is zero.
  
Absolutely similar to the proposition 2 we can prove the following
\begin{utv}
The intersection of the image of the positive orthant after a map $T_G$ with a ball $\|\rho\|_1 \le 1$ --- image of a $m$-dimensional simplex after a map $T_G$.
\end {utv}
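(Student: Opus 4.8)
The plan is to mirror the proof of Proposition~2, adjusting the constraint on the barycentric parameters from an equality to an inequality so that the sphere $\|\rho\|_1 = 1$ is replaced by the solid ball $\|\rho\|_1 \le 1$. Concretely, I would reuse the same extreme weight distributions $w^i$ (with the single non-zero component $w^i_i = 1/q_i$) introduced there, and adjoin the zero weight distribution as an extra vertex. Let $\Delta$ be the convex hull of $\{0, w^1, \ldots, w^m\}$; since the $w^i$ lie on distinct coordinate axes, these $m+1$ points are affinely independent, so $\Delta$ is a genuine $m$-dimensional simplex, described as $\Delta = \{\,\sum_i t_i w^i : t_i \ge 0,\ \sum_i t_i \le 1\,\}$. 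This is exactly the Proposition~2 simplex with the origin added, which raises its dimension from $m-1$ to $m$ and relaxes the facet equation $\sum_i t_i = 1$ to $\sum_i t_i \le 1$.

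First I would check the forward inclusion $T_G(\Delta) \subseteq \Im T_G \cap B_1(0)$. For $w = \sum_i t_i w^i \in \Delta$ all components of $w$ are non-negative, so $T_G(w)$ lies in the cone $\Im T_G$; and by the linearity of $T_G$ together with the choice of the norm, $\|\rho_w\|_1 = \sum_i t_i \|\rho_{w^i}\|_1 = \sum_i t_i \le 1$, exactly as in Proposition~2 but now with the relaxed constraint. Hence $T_G(w) \in B_1(0)$.

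For the reverse inclusion I would start from an arbitrary $\rho \in \Im T_G \cap B_1(0)$. Because $\rho$ lies in the image of the positive orthant and, by Lemma~2, $T_G$ has maximal rank and hence is injective, there is a unique non-negative preimage $\bar w$ with $T_G(\bar w) = \rho$. Expanding in the basis, $\bar w = \sum_i k_i w^i$; the non-negativity of the coordinates of $\bar w$ forces $k_i \ge 0$, and $\|\rho\|_1 = \sum_i k_i \le 1$. Setting $t_i = k_i$, with the origin carrying the remaining barycentric weight $1 - \sum_i k_i \ge 0$, shows $\bar w \in \Delta$, so $\rho \in T_G(\Delta)$. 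Combining the two inclusions yields the claimed equality.

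I do not expect a serious obstacle here: the only points that need care are that the preimage $\bar w$ is both non-negative and unique --- which is precisely what Lemma~1 and Lemma~2 supply --- and the bookkeeping that turns the solid ball constraint into the inequality $\sum_i t_i \le 1$ defining the full-dimensional simplex. Everything else is the same linearity-plus-norm computation already carried out for the sphere in Proposition~2.
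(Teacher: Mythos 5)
Your proposal is correct and follows essentially the same route as the paper: the paper's own proof simply says to adjoin the vertex $w = (0,\ldots,0)$ to the simplex of Proposition~2 and repeat that argument, which is exactly what you do, with the equality $\sum_i t_i = 1$ relaxed to $\sum_i t_i \le 1$. Your write-up merely spells out the details (forward and reverse inclusions) that the paper leaves implicit.
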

\begin {proof}
The only difference is that we need to add a vertex $w = (0,пїЅ,0)$ to the basis. The rest is the same.
\end {proof}

There has left only to find a way, how to calculate volumes of these simplexes. According to the fact, that the image of the simplex is a simplex, and we know coordinates of its vertices, then the volume $V$ can be calculated with a help of a matrix $W$, built from the vectors
$$W_i = V_i - V_0,$$
where the vector $V_i$ --- consists of the elements of the upper triangle of the distance matrix $T_G(w^i)$, enumerated in some way, the same for all i. In this notations
$$V = \frac {|\det W W^T|^{\frac{1}{2}}}{m!}$$ 
Moreover, we have a vertex  $w = (0,пїЅ,0)$, and can give it a zero number, then $W_i=V_i$.

Let $Q=W W^T$. In the selected numeration $Q_{ij} = <V_i, V_j>$, where $< , >$ denotes a scalar product of vectors. 
A coordinate of the vector $V_i$ is either  $1/q_i$, if $i$-th edge belongs to the path connecting the corresponding pair of boundary vertices or zero otherwise. Thus $Q_{ij}$ is equal to the number of unordered pairs of vertices, such that both i-th and j-th edge belongs to the path, connecting a pair of boundary vertices, divided by $q_iq_j$ (here we see, that the result doesn't depend on the way we have enumerated coordinates of vectors $V_i$, which choice hasn't been described above).

\section {Example}

The minimal number of points in $M$, such that there exists two different (not isomorphic) binary tree topologies, is 6. Let's enumerate edges of topologies as follows:

\includegraphics[width=50mm]{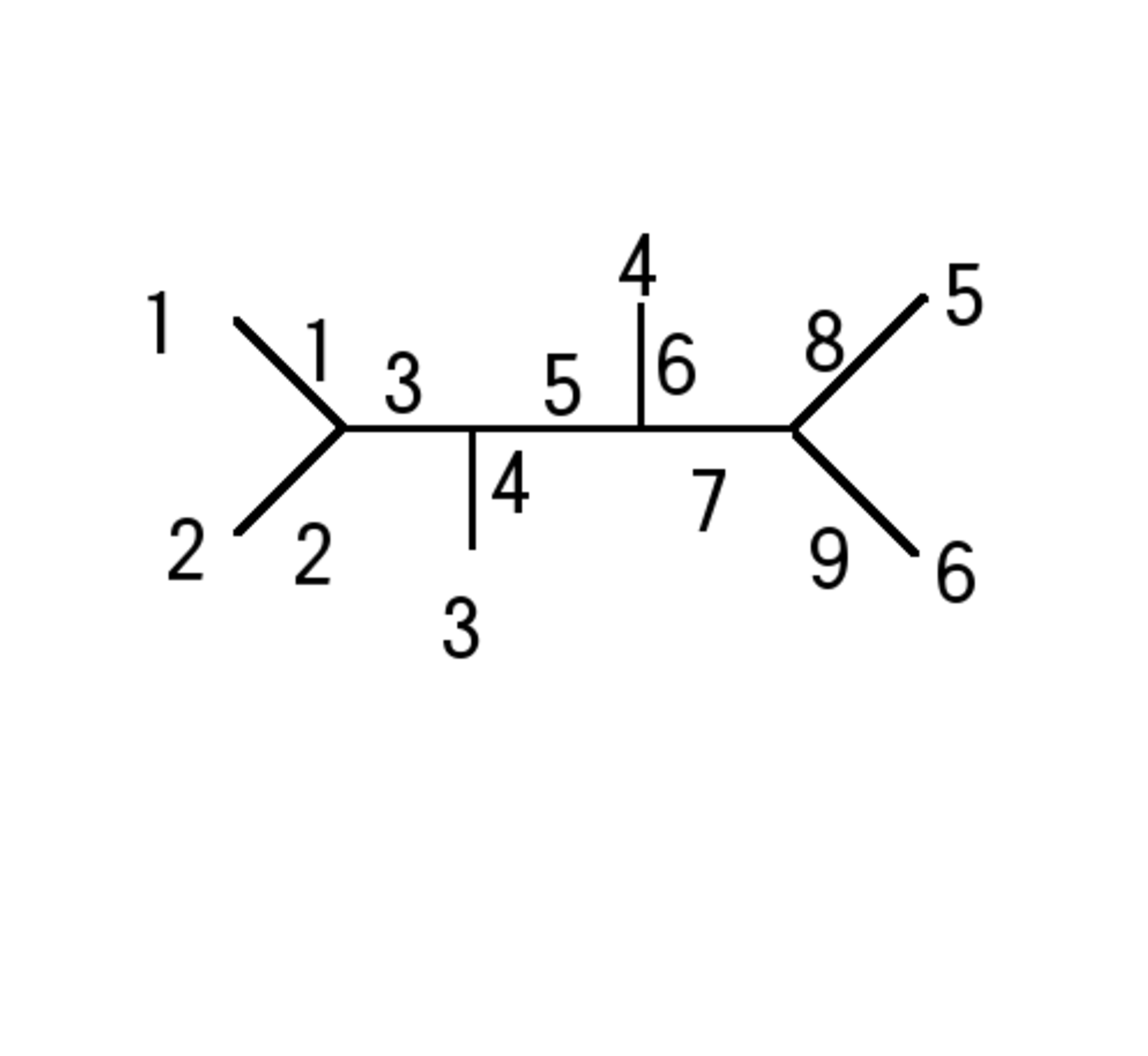}
\includegraphics[width=50mm]{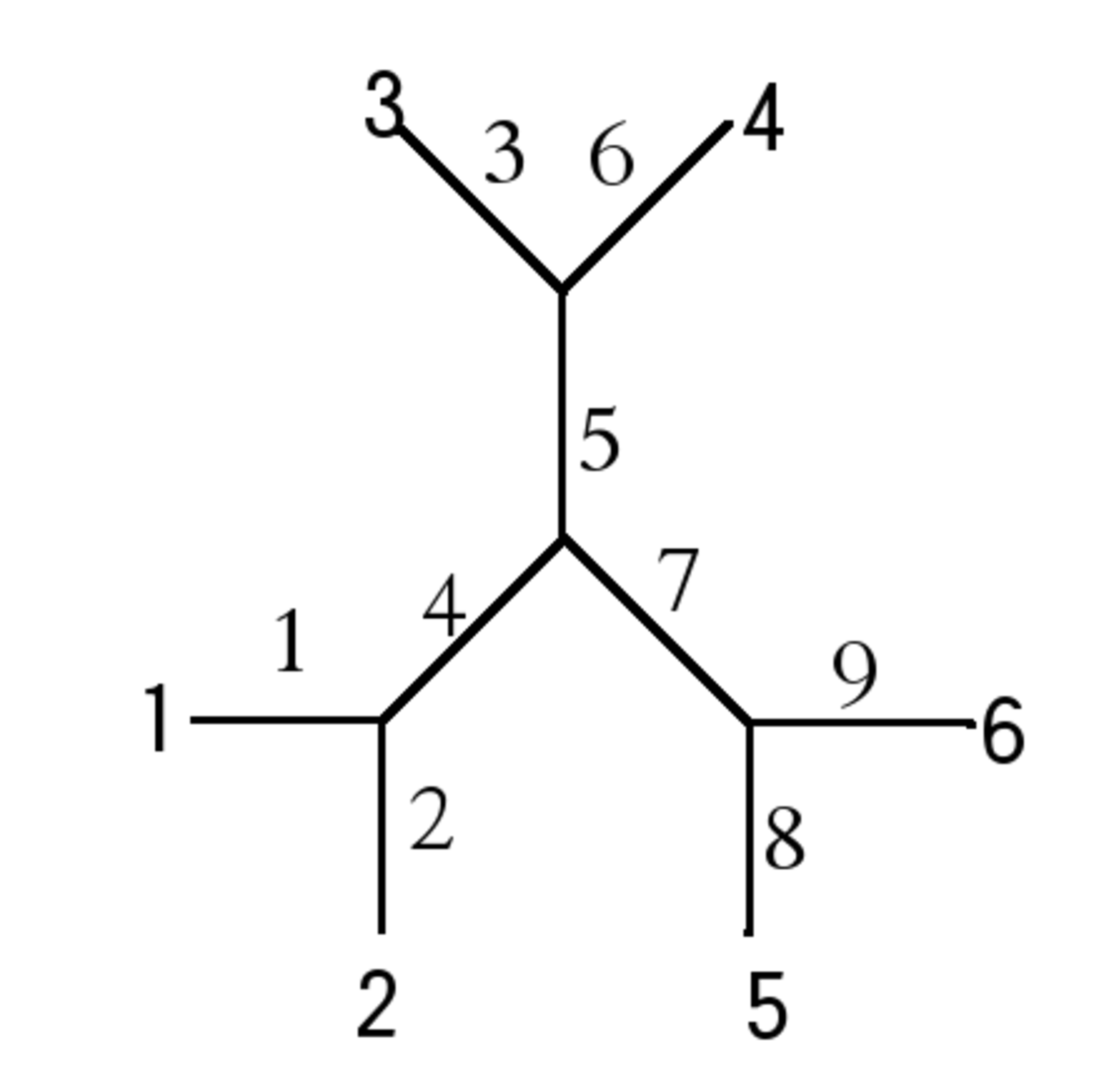}

Here $m=9$. Let's denote by $q^k$ a vector consisting of the consecutive values $q_i$  for the $k$-th topology. Similarly  $Q^k$ is the matrix $Q$ for the $k$-th topology.
$$q^1=(5,5,8,5,9,5,8,5,5)$$
$$q^2=(5,5,5,8,8,5,8,5,5)$$
$$ Q^1 = \begin{pmatrix}
                              1/5&1/25 & 1/10& 1/25& 1/15& 1/25&1/20 &1/25 &1/25\\
                              1/25& 1/5& 1/10& 1/25& 1/15& 1/25&1/20 &1/25 &1/25\\
                              1/10& 1/10& 1/8& 1/20& 1/12& 1/20&1/16 &1/20 &1/20\\
			       1/25& 1/25& 1/20&1/5& 1/15& 1/25&1/20 &1/25 &1/25\\
                              1/15& 1/15& 1/12&1/15 &1/9& 1/15&1/12 &1/15 &1/15\\
                              1/25& 1/25& 1/20&1/25 &1/15 &1/5&1/20 &1/25 &1/25\\
                              1/20& 1/20& 1/16&1/20 &1/12 &1/20 &1/8 &1/10 &1/10\\
                              1/25& 1/25& 1/20&1/25 &1/15 &1/25 &1/10 &1/5 &1/25\\
                              1/25& 1/25& 1/20&1/25 &1/15 &1/25 &1/10 &1/25 &1/5
                             \end{pmatrix}$$ 
$$ Q^2 = \begin{pmatrix}
                             1/5& 1/25 &1/25 &1/10 &1/20 &1/25 &1/20 &1/25 &1/25\\
                             1/25& 1/5 &1/25 &1/10 &1/20 &1/25 &1/20 &1/25 &1/25\\
                             1/25& 1/25& 1/5 &1/20 &1/10 &1/25 &1/20 &1/25 &1/25\\
                             1/10& 1/10& 1/20& 1/8 &1/16 &1/20 &1/16 &1/20 &1/20\\
                             1/20& 1/20& 1/10&1/16& 1/8  &1/10 &1/16 &1/20 &1/20\\
                             1/25& 1/25& 1/25&1/20& 1/10& 1/5  &1/20 &1/25 &1/25\\
                             1/20& 1/20& 1/20&1/16& 1/16&1/20 &1/8   &1/10 &1/10\\
                             1/25& 1/25& 1/25&1/20& 1/20&1/25 &1/10 &1/5   &1/25\\
                             1/25& 1/25& 1/25&1/20& 1/20&1/25 &1/10 &1/25 &1/5
                             \end{pmatrix}$$ 

The determinants of these matrices are equal to   $\frac{4}{9} \cdot 5^{-12}$ and $\frac{1}{2} \cdot 5^{-12}$ correspondingly. 
$$\simm(G_1)=8,\simm(G_2)=48.$$ Thus $\p(G_1) / \p(G_2) = \frac{16}{3}$, which gives $$\p(G_1) = 16/19,\p(G_2) = 3/19.$$

\section {Calculation of the determinants in general case}

Let's consider a binary tree with  $n$ boundary vertices. Let's fix one of the vertices with degree 3 and call it {\em center}. Suppose that edges are enumerated in some way. As before, we will say that edges $i$ and $j$ {\em belong to one branch}, if there exists such a vertex, that a path from the center to this vertex contains both of edges. We will denote by the {\em level } $l(i)$ of the edge with a number $i$ the number of edges in the path, connecting center with the closest vertex, incident to the edge $i$.

\begin {utv}
For any binary tree with  $n$ boundary vertices and a fixed center, the following holds
$$\det Q =\frac{4}{\prod\limits_i (n-n(i))^2 }\cdot\prod_{(j,k)}\Big[4\Big(\frac {n}{n(k)+n(j)}-1 \Big)\Big], $$ where $n(i)$ ( $i$ is an edge) is the number of boundary vertices such that a path from the center to this vertex contain the edge $i$. Here the first product is taken over all the edges $i$, and the second product is taken among all pairs $(j,k)$ such that edges $j$ and $k$ are adjacent and $l(j)=l(k)\ne 0$.

\end{utv}

\begin {proof}
Let's note that $q_i$=$n(i)(n-n(i))$. If edges $i$ and $j$ belong to one branch, then without loss of generality $l(i)\le l(j)$ and $Q_{ij}=\frac {n(j)(n-n(i))}{q_{i}q_{j}}$. If edges  $i$ пїЅ $j$ belong to different branches (don't belong to one branch), then $Q_{ij}=\frac{n(i)n(j)}{q_{i}q_{j}}$.

Suppose that edges $i$,$j$ belong to one branch and $l(j)=l(i)+1$. Let's consider the possible placements of an extra edge $k$ with fixed $i$ and $j$:

$(1)$.  $k=j$

$(2)$.  $k$ belongs to the common for $i$ and $j$ branch and $l(k) \le l(i)$

$(3)$. $k$ belongs to the common for $i$ and $j$ branch and  $l(k) > l(j)$

$(4)$. $k$ belongs to one branch with $i$, but not with $j$ and $l(k) = l(j)$

$(5)$. $k$ belongs to one branch with $i$, but not with $j$ and$j$ пїЅ $l(k) > l(j)$

$(6)$. $k$ doesn't belong to one branch with $i$.

Let's calculate the value $R_{ijk}=(n-n(i))Q_{ik}-(n-n(j))Q_{jk}$ for some of mentioned above cases. 

$(1)$.  $R_{ijj}=(n-n(i))Q_{ij}-(n-n(j))Q_{jj}=\frac {(n-n(i))^2 n(j)}{q_{i} q_{j}} - \frac {(n-n(j))^2 n(j)}{q_{j}^2}=\frac {n-n(i)}{n-n(j)}  \frac{1}{n(i)}-\frac {1}{n(j)}$

$(2)$.  $l(k)\le l(i)< l(j)$ пїЅ $R_{ijk}=\frac{(n-n(i))n(i)(n-n(k))}{q_{i}q_{j}} -\frac{(n-n(j))n(j)(n-n(k))}{q_{j}q_{k}}=0 $

$(4)$. $R_{ijk}=\frac {(n-n(i))^2 n(k)}{q_{i} q_{k}} - \frac  {(n-n(j)) n(j)n(k)}{q_{j} q_{k}}= \frac {n-n(i)}{n(i)(n-n(k))} - \frac{1}{n-n(k)}$

$(6)$. $R_{ijk}=\frac{(n-n(i)) n(i)n(k)}{q_{i} q_{k}}-\frac {(n-n(j)) n(j)n(k)}{q_{j} q_{k}}=0$

For cases  $(3)$ and $(5)$  $l(k)>l(j)$  and a pair $(k,i)$ belongs to one branch. It is sufficient for us.

{\bf Now everything is ready to calculate $\det Q=\det ( Q_{ij})$.}

The center is incident to 3 edges. If $r$ is one of these edges, then the maximal subtree consisting of the edges lying on the same branch with r is referred as the {\em main branch} corresponding to $r$. 

{\bf Step $1$}. Let's take one of main branches, starting from the center. Let  $j$ be the number of an edge with maximal level in this branch. There can be two cases:

a). $l(j)=0$ and then $j=r$

b). $l(j)\ne 0$ and then  we know from the binary structure that there are exactly 2 edges adjacent to $j$: say, $i$ and $k$. Moreover  $l(i)=l(j)-1$, $l(k)=l(j) $ and pairs $(i,k)$ and $(j,i)$ belong to one branch, but not the pair $(j,k)$. Further we will call edges $j,k$ {\em examined}.

{\bf Step $2$}. Let's make the following elementary operations with rows of the matrix $Q$:

$1) $. From the $j$-th row subtract the $i$-th row, multiplied by the non-zero coefficient $\frac {n-n(i)}{n-n(j)}$. According to the maximality of level of the edge $j$, there are no edges from the same branch with bigger level. After this subtractions, for the element with number  $t$ in the $j$-th row we have $\hat Q_{jt}=\frac{-R_{ijt}}{n-n(j)}$. Thus non-zero values will have only elements $\hat Q_{jj}$ and $\hat Q_{jk}$.

$2)$. Similarly from the $k$-th row subtract the $i$-th row, multiplied by $\frac {n-n(i)}{n-n(k)}$. In the $k$-th row of the obtained matrix non-zero values will have only elements $\hat Q_{kj}$ and $\hat Q_{kk}$.

{\bf Step $3$}. According to the form of rows with numbers $k$ and $j$ we have

$$\det Q=\det \begin {pmatrix}
 \hat Q_{kk}&\hat Q_{kj}\\
\hat Q_{jk} &\hat Q_{jj} \\ 
\end{pmatrix} \cdot \det Q_1,$$
 where a matrix $Q_1$ is obtained from the matrix $Q$ by throwing out rows and columns with numbers $k$ and $j$.

{\bf Step $4$.}
So, we have:
$$\det \begin {pmatrix}
 \hat Q_{kk}&\hat Q_{kj}\\
\hat Q_{jk} &\hat Q_{jj} \\ 
\end{pmatrix}=$$ $$=\det \begin {pmatrix}
\frac {1}{n(k) (n-n(k))}-\frac{n-n(i)}{n(i)(n-n(k))^2}&\frac{1}{(n-n(j))(n-n(k))}(1-\frac{n-n(i)}{n(i)})\\
\frac{1}{(n-n(j))(n-n(k))}(1-\frac{n-n(i)}{n(i)})&\frac {1}{n(j) (n-n(j))}-\frac{n-n(i)}{n(i)(n-n(j))^2}\\
\end{pmatrix}=$$ $$=\frac{1}{(n-n(j))^2 (n-n(k))^2} \Big [\Big(\frac{n-n(k)}{n(k)} -\frac {n-n(i)}{n(i)}\Big)\Big(\frac {n-n(j)}{n(j)}-\frac {n-n(i)}{n(i)}\Big)-\Big(1-\frac {n-n(i)}{n(i)}\Big)^2 \Big]=$$ $$=\frac {1}{(n-n(j))^2 (n-n(k))^2} \Big[\Big(\frac{n}{n(k)}-\frac{n}{n(i)}\Big)\Big(\frac{n}{n(j)}-\frac{n}{n(i)}\Big)-\Big(2-\frac{n}{n(i)}\Big)^2\Big]=$$ $$=\frac {1}{(n-n(j))^2 (n-n(k))^2} \Big[\Big(\frac{n}{n(k)}-\frac{n}{n(k)+n(j)}\Big)\Big(\frac {n}{n(j)}-\frac{n}{n(k)+n(j)}\Big)-\Big(2-\frac{n}{n(k)+n(j)}\Big)^2\Big],$$ where the last equality holds, because  $n(i)=n(k)+n(j)$ (the tree is binary).

{\bf Step $5$.} Let's consider the edge $j_1$ from the same branch having the maximal level among the non-examined edges.
If  $l(j_1)>0$, then we repeat Steps 1-4 (if $l(j_1)<l(j)$, then in the Step 2 there might be extra non-zero values in 'thrown out' columns, but they don't influence the determinant value).

{\bf Step $6$.} Repeat Step 5 while there are edges with non-zero level.

{\bf Step $7$.}  Repeat Steps 1-6 for other main branches.

{\bf Step $8$.} Suppose that after Steps  1-7 we have got the matrix $\bar Q$. It is obtained from the initial matrix by stroking out all the rows and columns, but not the ones, corresponding to the edges incident to the center (let them have the numbers $c_1$,$c_2$,$c_3$). Then

$$\det \bar Q=
\det \begin {pmatrix}
\frac {1}{n(c_1) (n-n(c_1))}&\frac{1}{(n-n(c_1))(n-n(c_2))}&\frac{1}{(n-n(c_1))(n-n(c_3))}\\
\frac {1}{(n-n(c_1)) (n-n(c_2))}&\frac{1}{n(c_2)(n-n(c_2))}&\frac{1}{(n-n(c_2))(n-n(c_3))}\\
\frac {1}{(n-n(c_1)) (n-n(c_3))}&\frac{1}{(n-n(c_2))(n-n(c_3))}&\frac{1}{n(c_3)(n-n(c_3))}\\

\end{pmatrix}=$$ 

$$=\frac {1}{\prod\limits_i (n-n(c_i))} 
\begin {pmatrix}
\frac {1}{n(c_1) }    &  \frac{1}{n-n(c_2)}  &  \frac{1}{n-n(c_3)}\\
\frac {1}{n-n(c_1)}  &  \frac{1}{n(c_2)}     &  \frac{1}{n-n(c_3)}\\
\frac {1}{n-n(c_1)}  &  \frac{1}{n-n(c_2)}  &  \frac{1}{n(c_3)}\\
\end{pmatrix}=$$ 

$$=\frac {1}{\prod\limits_i (n-n(c_i))^2} 
\begin {pmatrix}
\frac {n-n(c_1)}{n(c_1)} & 1 & 1 \\
1 & \frac {n-n(c_2)}{n(c_2)} & 1 \\
1 & 1 & \frac {n-n(c_3)}{n(c_3)}\\
\end{pmatrix}=$$ 
 
$$=\frac {1}{\prod\limits_i (n-n(c_i))^2} \cdot \Big[\prod\limits_j \frac{n-n(c_j)}{n(c_j)}-\frac{n-n(c_1)}{n(c_1)}-1\cdot \Big(\frac{n-n(c_3)}{n(c_3)}-1 \Big)+1\cdot \Big(1-\frac{n-n(c_2)}{n(c_2)}\Big)\Big]=$$ 
$$ = \frac{1}{\prod\limits_i (n-n(c_i))^2}\Big[\Big(\frac{n}{n(c_1)}-1\Big)\Big(\frac{n}{n(c_2)}-1\Big)\Big(\frac{n}{n(c_3)}-1\Big)-\sum_j \frac{n}{n(c_j)}+5\Big]=$$ 
$$=\frac {1}{\prod\limits_i (n-n(c_i))^2}\Big[\frac{n^3}{\prod\limits_j n(c_j)}-\frac{n^2(n(c_1)+n(c_2)+n(c_3))}{\prod\limits_j n(c_j)}+\sum_j \frac{n}{n(c_j)}-1-\sum_j \frac{n}{n(c_j)}+5\Big]=$$ $$=\frac{4}{\prod\limits_i (n-n(c_i))^2},$$ where we have used $n=n(c_1)+n(c_2)+n(c_3)$.

{\bf Step $9$.} Hereby the determinant can be calculated:  
$$\det Q=\frac{4}{\prod\limits_{l=1}^3 (n-n(c_l))^2}\prod\limits_{(j,k)}\Big[\frac{1}{(n-n(j))^2(n-n(k))^2}\cdot $$ $$\cdot \Big(\Big(\frac{n}{n(k)}-\frac{n}{n(k)+n(j)}\Big) \Big(\frac{n}{n(j)}-\frac{n}{n(k)+n(j)}\Big)-\Big(2-\frac {n}{n(k)+n(j)}\Big)^2\Big)\Big]=$$ $$=\frac{4}{\prod\limits_i (n-n(i))^2 }\cdot\prod_{(j,k)}\Big[n^2 \Big(\Big(\frac {1}{n(k)}-\frac {1}{n(k)+n(j)}\Big) \Big(\frac{1}{n(j)} - \frac{1}{n(k)+n(j)}\Big)-\Big(\frac{2}{n}-\frac{1}{n(k)+n(j)}\Big)^2\Big)\Big]=$$
$$ =\frac{4}{\prod\limits_i (n-n(i))^2 }\cdot\prod_{(j,k)}\Big[4\Big(\frac {n}{n(k)+n(j)}-1 \Big)\Big], $$ where pairs $(j,k)$ are such that edges  $j$ and $k$ are adjacent and $l(j)=l(k)\ne 0$.

\end {proof}

{\bf Remark}. The center can be chosen in a way that is more comfortable for calculations and the final result doesn't depend on the choice.

\section {The ratio of probabilities for fixed types}

Now we can use the algorithm described above  to find the asymptotic of the ratio for fixed topologies. Let's fix an even $n$ and choose topologies, which have a same number of symmetries:
\newline
\includegraphics[width=70mm]{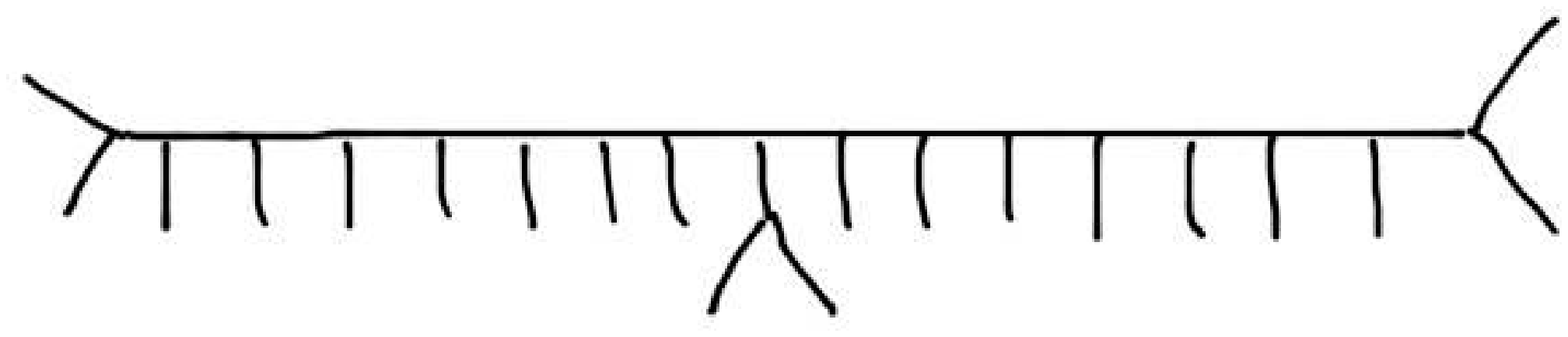}
\newline
and
\newline
\includegraphics[width=70mm]{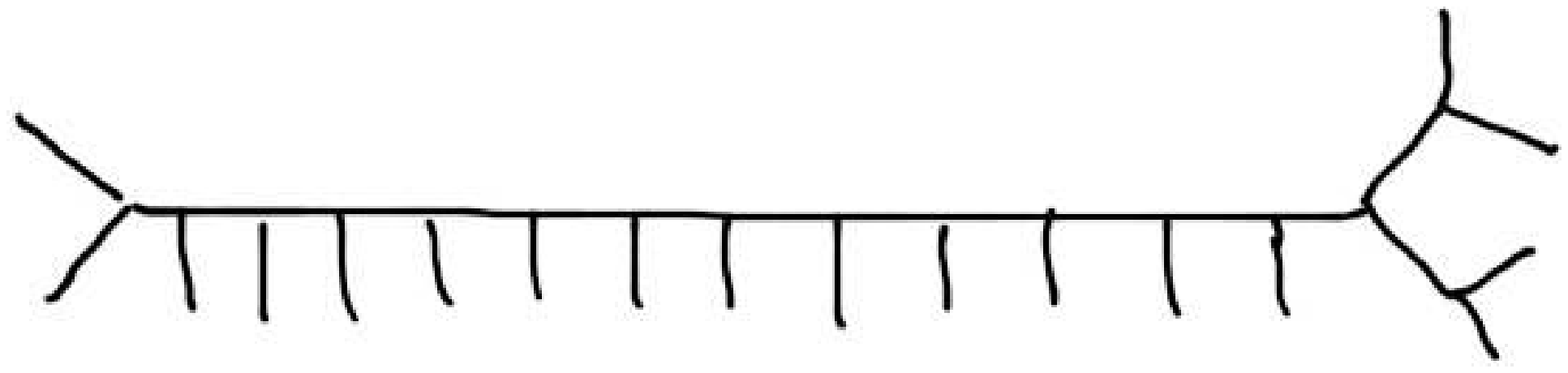}

To calculate the determinant of the matrix  $Q_1$ (for the first topology) we can choose a center in a way, that it cuts the tree into a subtree, containing 3 edges, and two similar subtrees, containing  $(n-2)/2$ boundary vertices each. For each of the similar branches 
$$\prod_{(j,k)}\Big[4\Big(\frac {n}{n(k)+n(j)}-1 \Big)\Big] =  \Big[ 4^{n/2-2}\frac {(n-2)!}{\big(\frac{n-2}{2}\big)!\big( \frac{n}{2}\big)!}\Big],$$
because without loss of generality we can say, that $n(j)=1$ and $n(k)$ takes all values from 1 to $n/2-2$. Thus

$$\det Q_1 =\frac{4}{\prod\limits_i (n-n(i))^2 }\cdot \Big[ 4^{n/2-2}\frac {(n-2)!}{\big(\frac{n-2}{2}\big)!\big(\frac{n}{2}\big)!}\Big]^2\Big[4\Big(\frac{n}{2} - 1 \Big) \Big]=$$
$$=\frac {4^{n-2}}{(n-1)^{2n}(n-2)^2\Big[ \frac {(n-2)!}{(\frac {n}{2} )!}  \Big]^4}\cdot \Big[ \frac {(n-2)!}{\big(\frac{n-2}{2}\big)!\big(\frac{n}{2}\big)!}\Big]^2\Big[\frac{n}{2} - 1 \Big]=$$
$$=\frac {4^{n-2}}{(n-1)^{2n}(\frac {n}{2}-1)!^2(n-2)^2\Big[ \frac {(n-2)!}{(\frac {n}{2})!}  \Big]^2}\Big[\frac{n}{2} - 1 \Big]=\frac {4^{n-3} n^2}{2(n-1)^{2n}(n-2)(n-2)!^2}$$

For the second topology the choice of the center will be the following: one branch is the same as one of similar subtrees from the previous case, the second branch contain only one edge, and all the rest is in the third branch. For the third branch the calculations are almost similar, but  $l(k)$ has a value  1 twice, doesn't take the value 3 and when $l(k)=2$, $l(j)$ is also 2, not 1.
$$\det Q_2 =\frac{4}{\prod\limits_i (n-n(i))^2 }\cdot \Big[ 4^{n/2-2}\frac {(n-2)!}{\big(\frac{n-2}{2}\big)!\big(n - \frac{n-2}{2}-1\big)!}\Big]\cdot \Big[4\Big(\frac{n}{2}-1\Big)\Big]^2\Big[4\Big(\frac{n}{4}-1\Big)\Big]\cdot$$ $$\cdot\prod_{t=4}^{\frac{n}{2}-1}\Big[4\Big(\frac{n}{t+1} - 1\Big)  \Big]=$$
$$=\frac {4^{n-2}}{(n-1)^{2n}\Big[ \frac {(n-2)!}{(\frac {n}{2})!}  \Big]^2\Big[ \frac {(n-4)!}{(\frac {n}{2}-1)!}(n-2)^2  \Big]^2}\cdot \frac {(n-2)!}{\big(\frac{n-2}{2}\big)!\big(\frac{n}{2}\big)!}\Big(\frac{n}{2}-1\Big)^2\Big(\frac{n}{4}-1\Big)\frac{\Big[ \frac {(n-5)!}{(\frac {n}{2}-1)!}  \Big]}{\Big[ \frac {(\frac {n}{2})!}{4!}  \Big]}=$$
$$=\frac {4^{n-3}6}{(n-1)^{2n}(n-2)^2(n-4)!(n-2)!}$$

Hereby
$$ \frac {\det Q_1}{\det Q_2}=\frac {1}{12} \frac {n^2}{(n-3)}$$
And we see that, asymptotically when  $n \to \infty$ first topology is more probable, because
$$\lim_{n \to \infty} \frac {\det Q_1}{\det Q_2} = \infty$$

For the more common case of the topology with 3 mustache, where "middle" mustache are located between  $k$-th and $(k+1)$-th edges in the path, connecting boundary vertices of the rest two mustaches, $2\le k\le n-4$, then 
$$\det Q_k = \frac{4^{n-2} (k+1)(n-k-1)}{2(n-1)^{2n} \bigl((n-2)!\bigr) (n-2)}$$ 
and
$$\frac {\det Q_{k_1}}{\det Q_{k_2}} = \frac {(k_1+1)(n-k_1-1)}{(k_2 + 1)(n-k_2-1)}$$
For fixed $k_1$ пїЅ $k_2$ the limit is finite. Moreover it is clear visible, that if the third mustache is closer to the edge, then the topology is less probable. If, for example, $k_1$ has an order of $n/2$, and $k_2 = const$, then the limit is infinite and this has been demonstrated above.

\section {Conclusion}

A probability measure on the set of topologies of minimal fillings for additive spaces has been introduced. The analytical formulas, permitting the calculation of probabilities of arbitrary topologies, are derived, making possible their comparison. As an example, the ratio of probabilities for two families of topologies with the same number of edges have been calculated as well as the asymptotic of the ratio.
It is clear, that if the number of vertices is more than 6, then there exist more than two topologies and, thus, to calculate final values of probabilities, the algorithm must be used for all topologies and values must be normalized, but these is not necessery for the practical cases. It is important to note, that results, obtained with analytical solution, coincide with the computer simulation for the computed cases.

 \end{document}